\documentclass[12pt]{amsart}

\usepackage[margin=2cm]{geometry}
\usepackage[french,english]{babel}
\usepackage{amsmath}
\usepackage{amsthm}
\usepackage{amssymb}
\usepackage{enumerate}
\usepackage{ucs}
\usepackage[utf8]{inputenc}
\usepackage[T1]{fontenc}
\usepackage{color}
\usepackage{ stmaryrd }
\usepackage{hyperref}
\usepackage[retainorgcmds]{IEEEtrantools}
\usepackage[initials, shortalphabetic]{amsrefs}

\linespread{1.3}

  \newcommand{\Lang}{\mathcal L}

  \newcommand{\I}{[0,1]}

 \newcommand{\bit}{\begin{itemize}}
 \newcommand{\eit}{\end{itemize}}
  \newcommand{\ben}{\begin{enumerate}}
 \newcommand{\een}{\end{enumerate}}
 \newcommand{\eps}{\epsilon}

 \theoremstyle{definition}
\newtheorem{thm}{Theorem}
\newtheorem{cor}[thm]{Corollary}
\newtheorem{lem}[thm]{Lemma}
\newtheorem{prop}[thm]{Proposition}
\newtheorem{df}[thm]{Definition}

\newtheorem{ex}[thm]{Example} 

\newtheorem{rmq}[thm]{Remark}
\newtheorem*{rmq*}{Remark}

\newtheorem*{notation}{Notation}

  \title{Reconstruction of separably categorical metric structures}
  \author{Ita\"{i} Ben Yaacov and Adriane Ka\"{i}chouh}

\begin{document}
\reversemarginpar
\maketitle

\begin{abstract}
We extend Ahlbrandt and Ziegler's reconstruction results (\cite{MR831437}) to the metric setting: we show that separably categorical structures are determined, up to bi-interpretability, by their automorphism groups.
\end{abstract}

\section*{Introduction}

Categoricity offers an ideal setting for reconstruction: a lot of information on a categorical structure can be recovered from the action of its automorphism group. Indeed, the Ryll-Nardzewski theorem asserts that a classical countable structure is $\aleph_0$-categorical if and only if its automorphism group acts oligomorphically on it. From this, definability in countably categorical structures translates as invariance under the action of the automorphism group.

An analogous phenomenon occurs in the continuous setting: a metric structure is separably categorical if and only if the action of its automorphism group is approximately oligomorphic (\cite[theorem 12.10]{MR2436146}). This continuous Ryll-Nardzewski theorem again implies that definability boils down to invariance by automorphisms (see section 1).

In this paper, we focus on a reconstruction result due to Ahlbrandt and Ziegler (\cite{MR831437}) which states that countably categorical structures are determined, up to bi-interpretability, by their automorphism groups (regarded as topological groups). 
We extend Ahlbrandt and Ziegler's result to the continuous setting. More precisely, we introduce the notion of an interpretation between metric structures and prove that two separably categorical structures are bi-interpretable if and only if their automorphism groups are topologically isomorphic. 

This guarantees that every model-theoretic property of separably categorical structures will translate into a topological property of their automorphism groups. Tsankov and the first author (\cite{itaitodor}), and then Ibarluc\'{i}a (\cite{tomas-hierarchy}), are precisely studying model-theoretic properties directly on groups.

Although our result encompasses its classical counterpart, the proof we give is fundamentally metric and is quite different from the original one. Indeed, we apply a construction of Melleray (\cite[theorem 6]{MR2767973}) that provides a canonical way to make a metric structure out of any Polish group (we will call this the \textit{hat structure} associated to the group; see subsection 3.3 for a definition). The heart of the reconstruction consists in showing that every separably categorical metric structure is in fact bi-interpretable with the hat structure of its automorphism group.

\section{Definability}

In this section, we prove the aforementioned fact that in separably categorical structures, definability amounts to invariance under the action of the automorphism group. 
First, we introduce the following item of notation.

\begin{notation}
If $\rho$ is a bounded pseudometric on a structure $M$, then $(M,\rho)$ will denote the quotient metric space induced by $\rho$. For such a $\rho$, let $\rho^\omega$ be the pseudometric on $M^\omega$ defined by
$$\rho^\omega(a,a') = \sum_{n < \omega} \frac{1}{2^n} \rho(a_n,a'_n).$$
When $\rho$ is a metric, so is $\rho^\omega$, which then induces the product topology on $M^\omega$.
\end{notation}

\begin{prop}\label{criterion}
Let $M$ be a separably categorical metric structure and $G$ its automorphism group. Let $P : M^n \to \I$ a continuous predicate on $M$. 
Then $P$ is definable in $M$ if and only if $P$ is $G$-invariant.
\end{prop}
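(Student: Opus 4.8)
The plan is to prove the two implications separately; the forward implication is soft and the converse carries all the content. If $P$ is definable then, by definition, $P$ is a uniform limit on $M^n$ of interpretations $\varphi_k^M$ of formulas $\varphi_k(x_1,\dots,x_n)$. Each $\varphi_k^M$ is preserved by every automorphism of $M$, and a uniform limit of $G$-invariant functions is $G$-invariant, so $P$ is $G$-invariant. No categoricity is needed for this direction.

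For the converse, suppose $P$ is $G$-invariant; the goal is to exhibit $P$ as a uniform limit of formulas. Let $T$ be the theory of $M$; we may assume $M$ is the separable model of $T$. By the continuous Ryll--Nardzewski theorem (\cite[theorem 12.10]{MR2436146}), separable categoricity gives us three things: the action of $G=\mathrm{Aut}(M)$ on each $M^n$ is approximately oligomorphic; $M$ is approximately $\aleph_0$-saturated (every $n$-type over $T$ is approximately realized in $M$) and approximately homogeneous (two finite tuples of the same type lie in the same $G$-orbit closure); and the type space $S_n(T)$ is compact \emph{for the metric} $d$, so that the $d$-topology and the logic topology on $S_n(T)$ coincide. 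The first step is to show that $P$ factors through the type map $\mathrm{tp}\colon M^n\to S_n(T)$. If $a,b\in M^n$ satisfy $\mathrm{tp}(a)=\mathrm{tp}(b)$, then by approximate homogeneity, for every $\eps>0$ there is $g\in G$ with $d(ga,b)<\eps$; since $P$ is continuous and $P(ga)=P(a)$, letting $\eps\to 0$ gives $P(a)=P(b)$. Hence there is a function $\hat P\colon S_n(T)\to\I$ with $P=\hat P\circ\mathrm{tp}$, whose values are already determined on the $d$-dense (by approximate saturation) set of types realized in $M$.

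The second step is to see that $\hat P$ is continuous on $S_n(T)$ for the logic topology, for then $\hat P$ is a continuous function on a compact type space, and the continuous Stone--Weierstrass theorem presents it as a uniform limit of formulas $\varphi_k$; pulling back along $\mathrm{tp}$ yields $P=\lim_k\varphi_k^M$ uniformly on $M^n$, i.e.\ $P$ is definable. Now $P$ is a \emph{uniformly} continuous predicate on $M^n$, hence has a modulus of uniform continuity; using approximate saturation to realize pairs of types by pairs of tuples in $M^n$ at prescribed distance transfers this modulus to one for $\hat P$ with respect to $d$. Thus $\hat P$ is $d$-uniformly continuous, and since on $S_n(T)$ the $d$-topology agrees with the logic topology, $\hat P$ is continuous for the logic topology, as needed. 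The step I expect to be the real obstacle is the careful passage, in these two places, between abstract types in $S_n(T)$ and honest tuples in $M^n$: using approximate homogeneity to conclude that $P$ descends to $S_n(T)$, and using approximate saturation to carry the modulus of uniform continuity of $P$ over to $\hat P$. Once $\hat P$ is known to be $d$-uniformly continuous, the identification of the metric and logic topologies on $S_n(T)$ — which is precisely where separable categoricity is used, beyond mere homogeneity — together with Stone--Weierstrass makes the conclusion essentially formal.
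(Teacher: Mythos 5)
Your proof takes essentially the same route as the paper's: descend $P$ to a map $\hat P$ on the type space $S_n(T)$ using approximate homogeneity, note that Ryll--Nardzewski makes the $d$-metric topology coincide with the logic topology on $S_n(T)$, and then conclude definability from continuity of $\hat P$ (the paper cites~\cite[theorem 9.9]{MR2436146} where you unwind it via Stone--Weierstrass, but that is the same content). One small imprecision: you assert that $P$ is \emph{uniformly} continuous and then transfer its modulus to $\hat P$, but the hypothesis only gives continuity of $P$; the cleaner order is to show that $\hat P$ is $d$-continuous pointwise (ordinary continuity of $P$ at a realization of $p$, together with approximate saturation to realize nearby types by nearby tuples, suffices), and then let compactness of $S_n(T)$ upgrade $\hat P$ to uniform continuity --- from which uniform continuity of $P = \hat P\circ\mathrm{tp}$ follows as a by-product rather than an input.
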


\begin{proof}
$\Rightarrow$] If $P$ is definable, there is a sequence $(\varphi_k)_{k \geqslant 1}$ of formulas which converges uniformly to $P$. Now $G$ preserves (interpretations of) formulas so $P$ is also $G$-invariant.

$\Leftarrow$] Suppose that $P$ is $G$-invariant. If $a$ and $b$ have the same type in $M^n$, then, since $M$ is approximately homogeneous (\cite[corollary 12.11]{MR2436146}) and $P$ is continuous, the $G$-invariance of $P$ gives that $P(a) = P(b)$.

Thus, $P$ induces a metrically continuous map $\Phi : S_n(T) \to \I$ on types, defined by $\Phi(p) = P(a)$ for $a \in M^n$ of type $p$. Since every type is realized in $M$ (by the Ryll-Nardzewski theorem \cite[fact 1.14]{MR2291717}), the map $\Phi$ is well-defined. 

Now, by the Ryll-Nardzewski theorem again, the logic topology and the $d$-metric topology on $S_n(T)$ coincide. This implies that $\Phi$ is continuous for the logic topology as well. Thus, by theorem 9.9 of \cite{MR2436146}, the predicate $P$ is definable.
\end{proof}

\begin{rmq}
The same holds for predicates in an infinite number of variables. In fact, if $M^\omega$ is endowed with $d^\omega$, then the Ryll-Nardzewski theorem can be reformulated as follows: a metric structure $M$ is separably categorical if and only if the space $(S_\omega(M), d^\omega)$ of types in infinitely many variables is compact. Thus, the proof above readily adapts to an infinite number of variables.
\end{rmq}

\section{Reconstruction up to interdefinability}

We begin by reconstructing separably categorical structures up to interdefinability, mirroring Ahlbrandt and Ziegler's theorem 1.1 (in \cite{MR831437}). The proof is exactly the same as in the discrete setting.

\begin{df}
Let $M$ and $N$ be two structures on the same universe. We say that $M$ and $N$ are \textbf{interdefinable} if they have the same definable relations.
\end{df}

\begin{prop}
Let $M$ and $N$ be two separably categorical metric structures on the same universe, in languages $\Lang_M$ and $\Lang_N$ respectively. Then $M$ and $N$ are interdefinable if and only if their automorphism groups are equal.
\end{prop}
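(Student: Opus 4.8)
The plan is to obtain both implications formally from the Ryll--Nardzewski-type criterion of Proposition~\ref{criterion}, exactly as Ahlbrandt and Ziegler do in the discrete case. I would first note that since $M$ and $N$ are metric structures on the same universe, they share the underlying metric space $(M,d)$ --- the metric being part of the data of a metric structure, and in any case a definable predicate of each --- so ``continuous predicate on $M$'' and ``continuous predicate on $N$'' refer to the same class of maps $M^n\to\I$, Proposition~\ref{criterion} applies verbatim to both structures, and $\mathrm{Aut}(M)$ and $\mathrm{Aut}(N)$ are genuine subgroups of the group of isometric bijections of $(M,d)$, so the asserted equality of automorphism groups makes literal sense.

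For the implication ``interdefinable $\Rightarrow$ equal automorphism groups'', I would first establish that $\mathrm{Aut}(M)$ is exactly the group of isometric bijections of $(M,d)$ that preserve every predicate definable in $M$. One inclusion is the easy direction of Proposition~\ref{criterion}: an automorphism preserves the interpretations of all formulas, hence of their uniform limits. For the reverse inclusion I would use that the defining data of $M$ are themselves definable predicates --- a relation symbol of $\Lang_M$ directly, and a function symbol $f$ through the definable predicate $(\bar x,y)\mapsto d(f(\bar x),y)$, which an isometric bijection can preserve only if it commutes with $f^M$ --- so that any isometric bijection preserving all $M$-definable predicates is, by definition, an automorphism of $M$. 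The same description applies to $N$; since $M$ and $N$ have the same definable predicates by hypothesis, the two groups coincide.

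For the reverse implication I would take $\mathrm{Aut}(M)=\mathrm{Aut}(N)=:G$ and let $P:M^n\to\I$ be a continuous predicate on the common universe. By Proposition~\ref{criterion} applied to $M$, the predicate $P$ is definable in $M$ if and only if it is $\mathrm{Aut}(M)$-invariant; applied to $N$, it is definable in $N$ if and only if it is $\mathrm{Aut}(N)$-invariant. Since $\mathrm{Aut}(M)=G=\mathrm{Aut}(N)$, the two conditions are identical, so $P$ is definable in $M$ precisely when it is definable in $N$; that is, $M$ and $N$ are interdefinable. Predicates in infinitely many variables, if wanted, are handled the same way via the infinitary form of the criterion noted in the Remark.

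I do not expect a genuine obstacle here: once Proposition~\ref{criterion} is in hand the whole argument is a short formal manipulation. The one step that needs a moment's care --- and the only place where the metric setting differs from the classical one --- is the characterisation used in the first implication, that being an automorphism of $M$ coincides with being an isometric bijection of $(M,d)$ that preserves every $M$-definable predicate; with that identification in place, interdefinability is exactly the hypothesis needed to conclude $\mathrm{Aut}(M)=\mathrm{Aut}(N)$.
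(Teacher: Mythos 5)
Your proof is correct and follows essentially the same route as the paper's: both directions come down to Proposition~\ref{criterion}, with the backward implication (where categoricity is actually used) being identical in substance. The only difference is cosmetic --- for the forward implication, the paper argues $\mathrm{Aut}(M)\subseteq\mathrm{Aut}(N)$ directly (relations of $\Lang_N$ are $M$-definable, hence $\mathrm{Aut}(M)$-invariant), whereas you first establish the characterisation of $\mathrm{Aut}(M)$ as the group of isometric bijections preserving all $M$-definable predicates and then read off the equality from interdefinability; your version is slightly more explicit about handling function symbols via the distance predicate $(\bar x,y)\mapsto d(f(\bar x),y)$, which the paper leaves implicit.
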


\begin{proof}
$\Rightarrow$] Assume that $M$ and $N$ are interdefinable and let $R$ be a relation in $\Lang_N$. Since it is definable in $N$, it is definable in $M$ as well, so proposition \ref{criterion} implies that it is $\text{Aut}(M)$-invariant. Thus $\text{Aut}(M)$ preserves every relation in $\Lang_N$ so $\text{Aut}(M) \subseteq \text{Aut}(N)$. Similarly, we obtain that $\text{Aut}(N) \subseteq \text{Aut}(M)$.

$\Leftarrow$] Conversely, assume that $\text{Aut}(M) = \text{Aut}(N)$ and let $R$ be definable in $M$. Then, by proposition \ref{criterion}, it is $\text{Aut}(M)$-invariant hence $\text{Aut}(N)$-invariant by assumption. Thus, $R$ is definable in $N$ and the two structures have the same definable relations.
\end{proof}

\section{Reconstruction up to bi-interpretability}

\subsection{Interpretations}

In the classical setting, an interpretation of a structure $M$ in an other structure $N$ is an embedding of $M$ into a definable quotient of a finite power of $N$, that is, into the imaginaries of $N$. As Usvyatsov and the first author pointed out in \cite{MR2657678}, the right definition of imaginaries in metric structures should allow classes of infinite tuples and this is also true for interpretations.


\begin{df}
Let $M$ and $N$ be two metric structures in languages $\Lang_M$ and $\Lang_N$ respectively. An \textbf{interpretation} of $M$ in $N$ consists of the following data:
\bit
\item a definable pseudometric $\rho$ on $N^\omega$ and
\item an isometric map $\varphi : (M,d_M) \to \widehat{(N^\omega,\rho)}$
\eit
such that
\bit
\item the predicate $P: N^\omega \to \I$ defined by $P(x) = \rho(x,\varphi(M))$ is definable in $N$ and 
\item for every formula $F$ in $\Lang_M$, the formula $P_F : \varphi(M)^r \to \I$ defined by $P_F(x) = F(\varphi^{-1}(x))$ is definable in $N$.
\eit
\end{df}

To verify the last condition, it suffices to check it on relation symbols in $\Lang_M$ and on predicates of the form $(x,y) \mapsto d(x,f(y))$, where $f$ is a function symbol in $\Lang_M$.

\begin{rmq}
If $M$ and $N$ are classical structures, they can be made into discrete metric structures. Then every interpretation of $M$ in $N$ (in the metric sense, as defined above) induces a classical interpretation of $M$ in $N$. To see this, given a metric interpretation $\varphi$ of $M$ in $N$, use the continuity of the associated pseudometric to choose a big enough $n$ such that the elements in the image $\varphi(M)$ (which is discrete) are determined by their restriction to the first $n$ coordinates. Then the equivalence relation on $N^n$ induced by restriction of $\rho$ is well-defined and definable, and it yields an interpretation of $M$ in $N$.
\end{rmq}

If $M$, $N$ and $K$ are metric structures, $\varphi: (M,d_M) \to \widehat{(N^\omega,\rho_N)}$ is an interpretation of $M$ in $N$ and $\psi: (N,d_N) \to \widehat{(K^\omega,\rho_K)}$ is an interpretation of $N$ in $K$, then we can \textbf{compose the interpretations} $\psi$ and $\varphi$ as follows. 

\noindent Taking the product of $\psi$, we get an isometric map $\psi^\omega : (N^\omega, d_N^\omega) \to \widehat{(K^{\omega \times \omega},\rho_K^\omega)} = \widehat{(K^\omega,\rho_K^\omega)}$. Now, since $\psi(N)$ is definable in $\widehat{(K^\omega,\rho_K)}$, the image $\psi^\omega(N^\omega)$ is definable in $\widehat{(K^\omega,\rho_K^\omega)}$ too. Besides, $\rho_N$ is a definable pseudometric, so its pushforward by $\psi^\omega$ also is. Then, by \cite[proposition 3.6]{MR2723769}, it extends to a definable pseudometric $\rho$ on $\widehat{(K^\omega,\rho_K^\omega)}$. Thus, the isometric map $\psi^\omega \circ \varphi : (M,d_M) \to \widehat{(K^\omega,\rho)}$ is an interpretation of $M$ in $K$.

\begin{rmq}\label{categorical}
If $N$ is separably categorical and $\rho$ is a definable pseudometric on $N^\omega$, then the structure $\widehat{(N^\omega,\rho)}$ is separably categorical too (and thus definability corresponds to invariance by the automorphism group). Indeed, the automorphism group of $N$ acts approximately oligomorphically on $N^\omega$ and thus on $\widehat{(N^\omega,\rho)}$ too. This implies that the whole automorphism group of $\widehat{(N^\omega,\rho)}$ is approximately oligomorphic so, by the Ryll-Nardzewski theorem, that $\widehat{(N^\omega,\rho)}$ is separably categorical.

In particular, any structure that is interpretable in a separably categorical one is itself separably categorical. That is the reason why it is necessary to impose an oligomorphicity restriction in theorems \ref{arbre} and \ref{onesidereconstruction}.
\end{rmq}

\begin{df}
Let $M$ and $N$ be two metric structures. We say that $M$ and $N$ are \textbf{bi-interpretable} if there exist interpretations $\varphi$ of $M$ in $N$ and $\psi$ of $N$ in $M$ such that $\psi \circ \varphi$ and $\varphi \circ \psi$ are definable.
\end{df}

In the rest of this section, we argue that interpretations between separably categorical structures correspond to continuous homomorphisms between their automorphism groups.

\subsection{From interpretations to group homomorphisms}

The first side of this correspondence is not too surprising, for it amounts to saying one can get information on the automorphism group from the structure. The process is however nicely functorial.

\begin{prop}
Let $M$ and $N$ be two metric structures and $\varphi$ an interpretation of $M$ in $N$. Then $\varphi$ induces a homomorphism of topological groups $\text{Aut}(\varphi)$ from $\text{Aut}(N)$ to $\text{Aut}(M)$.
\end{prop}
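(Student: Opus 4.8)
The plan is to construct the map $\text{Aut}(\varphi) : \text{Aut}(N) \to \text{Aut}(M)$ by transporting automorphisms of $N$ through the interpretation. Given $g \in \text{Aut}(N)$, the diagonal action of $g$ on $N^\omega$ preserves every definable predicate, in particular the definable pseudometric $\rho$, so it descends to an automorphism $\widehat g$ of the metric structure $\widehat{(N^\omega,\rho)}$. The key point is that $\widehat g$ preserves the image $\varphi(M)$: since $P(x) = \rho(x,\varphi(M))$ is definable in $N$, it is $\text{Aut}(N)$-invariant, so $\widehat g$ preserves the zero-set of $P$, which (as $\varphi$ is an isometry onto its image and the image is closed, being the zero-set of a continuous predicate) is exactly $\varphi(M)$. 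Hence $\widehat g$ restricts to an isometric bijection of $\varphi(M)$, and because the predicates $P_F$ for $F$ a formula of $\Lang_M$ are definable in $N$, they too are $\text{Aut}(N)$-invariant, so $\widehat g{\restriction}_{\varphi(M)}$ preserves the pullback under $\varphi^{-1}$ of every $\Lang_M$-formula; that is, $\varphi^{-1} \circ \widehat g \circ \varphi$ is an automorphism of $M$. I would define $\text{Aut}(\varphi)(g)$ to be this automorphism.

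Next I would check that $\text{Aut}(\varphi)$ is a group homomorphism, which is immediate: the assignment $g \mapsto \widehat g$ is a homomorphism $\text{Aut}(N) \to \text{Aut}\bigl(\widehat{(N^\omega,\rho)}\bigr)$ (the diagonal action on $N^\omega$ is an action, and passing to the quotient is functorial), restriction to the invariant set $\varphi(M)$ is a homomorphism into its isometry group, and conjugation by the fixed isometry $\varphi$ is a homomorphism. Composing three homomorphisms gives a homomorphism.

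Finally I would verify continuity. Both $\text{Aut}(N)$ and $\text{Aut}(M)$ carry the topology of pointwise convergence (on $N$, resp. $M$), and it suffices to check continuity at the identity, i.e. that if $g_i \to \text{id}$ in $\text{Aut}(N)$ then $\text{Aut}(\varphi)(g_i) \to \text{id}$ in $\text{Aut}(M)$, which amounts to: for each $a \in M$, $d_M\bigl(\text{Aut}(\varphi)(g_i)(a), a\bigr) = \rho\bigl(\widehat{g_i}(\varphi(a)), \varphi(a)\bigr) \to 0$. Writing $\varphi(a)$ as (the class of) a fixed point $x \in N^\omega$, this is $\rho^\omega$-type control: $\rho(\widehat{g_i}(x), x)$ is small once $g_i$ moves finitely many coordinates of $x$ by little, since $\rho$ is uniformly continuous with respect to $d_N^\omega$ (being a definable, hence continuous, pseudometric on $N^\omega$) and $d_N^\omega$ gives the product topology. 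So for $\eps > 0$ choose $n$ and $\delta$ from uniform continuity of $\rho$, then the basic neighbourhood of $\text{id}$ in $\text{Aut}(N)$ consisting of those $g$ with $d_N(g x_k, x_k) < \delta$ for $k < n$ maps into $\{h : d_M(h a, a) < \eps\}$; intersecting over the finitely (or countably) many relevant $a$ handles a basic neighbourhood in $\text{Aut}(M)$.

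The main obstacle, and the one step deserving care, is the claim that the zero-set of $P$ in $\widehat{(N^\omega,\rho)}$ is precisely $\varphi(M)$ and not merely its closure: this needs $\varphi(M)$ to be closed in $\widehat{(N^\omega,\rho)}$. This follows because $\varphi$ is an isometry from the complete metric space $(M,d_M)$ onto its image, so $\varphi(M)$ is complete, hence closed; one should state this explicitly. Everything else is bookkeeping about diagonal actions, invariance of definable predicates (Proposition \ref{criterion} is not even needed here — mere preservation of definable predicates by automorphisms suffices), and the uniform-continuity estimate for the pseudometric.
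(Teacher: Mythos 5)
Your proof is correct and follows essentially the same route as the paper's: define $\text{Aut}(\varphi)(g) = \varphi^{-1}\circ g\circ\varphi$ (conjugation), observe that invariance of the definable predicates $P$ and $P_F$ makes this a well-defined automorphism of $M$, and use (uniform) continuity of $\rho$ for continuity of the resulting homomorphism. The paper states these steps tersely; you expand the two points it leaves implicit, namely why $g$ actually stabilizes $\varphi(M)$ (it is the zero-set of the $\text{Aut}(N)$-invariant predicate $P$, and closed because $\varphi(M)$ is complete) and the $\eps$-$\delta$ verification of continuity via uniform continuity of $\rho$ with respect to $d_N^\omega$. You are also right that the full strength of Proposition~\ref{criterion} is not needed here, only the trivial direction that automorphisms preserve definable predicates.
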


\begin{proof}
Let $g$ be an automorphism of $N$. Then $g$ leaves $\varphi(M)$ and the predicates $P_R$ invariant so it induces an automorphism of $(\varphi(M), (P_R))$ and thus of $M$. More formally, if $a$ is an element of $M$, we put $\text{Aut}(\varphi)(g)(a) = \varphi^{-1}(g(\varphi(a)))$. Then $\text{Aut}(\varphi)$ is the conjugation by $\varphi$ so it is a group homomorphism.

And since $\rho$ is continuous, it is easy to see that $\text{Aut}(\varphi)$ is continuous.
\end{proof}

The map $\varphi \mapsto \text{Aut}(\varphi)$ is functorial: it respects composition.

\begin{lem}
Let $M$, $N$ and $K$ be metric structures, $\varphi$ an interpretation of $M$ in $N$ and $\psi$ an interpretation of $N$ in $K$. Then $\text{Aut}(\psi \circ \varphi) = \text{Aut}(\psi) \circ \text{Aut}(\varphi)$.
\end{lem}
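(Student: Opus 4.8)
The plan is to unwind the definitions on both sides and check that the two group homomorphisms agree as maps. Recall that, by construction, $\mathrm{Aut}(\varphi)$ is conjugation by $\varphi$: for $g \in \mathrm{Aut}(N)$ and $a \in M$, $\mathrm{Aut}(\varphi)(g)(a) = \varphi^{-1}(g(\varphi(a)))$; and likewise $\mathrm{Aut}(\psi)(h)(b) = \psi^{-1}(h(\psi(b)))$ for $h \in \mathrm{Aut}(K)$ and $b \in N$. So the right-hand side $\mathrm{Aut}(\psi)\circ\mathrm{Aut}(\varphi)$ sends $h \in \mathrm{Aut}(K)$ to the automorphism $a \mapsto \varphi^{-1}\bigl(\mathrm{Aut}(\psi)(h)(\varphi(a))\bigr)$, where $\mathrm{Aut}(\psi)(h)$ acts on the (image of the) interpretation of $M$ inside $\widehat{(N^\omega,\rho_N)}$ coordinatewise via $\psi$.

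Next I would describe the composite interpretation $\psi\circ\varphi$ explicitly enough to compute $\mathrm{Aut}(\psi\circ\varphi)$. As set up in the paragraph on composing interpretations, $\psi\circ\varphi$ is really $\psi^\omega\circ\varphi$, where $\psi^\omega : (N^\omega,d_N^\omega)\to\widehat{(K^\omega,\rho_K^\omega)}$ applies $\psi$ in each coordinate, and the resulting structure on $K^\omega$ carries the definable pseudometric $\rho$ extending the pushforward of $\rho_N$. The key observation is that an automorphism $h$ of $K$ induces the automorphism $h^\omega$ of $K^\omega$ (diagonal action), and under $\psi^\omega$ this corresponds coordinatewise to the action of $\mathrm{Aut}(\psi)(h)$ on $\psi(N)$; hence $h^\omega$ restricted to $\psi^\omega(N^\omega)$ is exactly $\psi^\omega$-conjugate to $(\mathrm{Aut}(\psi)(h))^\omega$ acting on $N^\omega$. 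Therefore $\mathrm{Aut}(\psi\circ\varphi)(h)(a) = (\psi^\omega\circ\varphi)^{-1}\bigl(h(\psi^\omega(\varphi(a)))\bigr) = \varphi^{-1}\bigl((\psi^\omega)^{-1}(h^\omega(\psi^\omega(\varphi(a))))\bigr) = \varphi^{-1}\bigl(\mathrm{Aut}(\psi)(h)(\varphi(a))\bigr)$, which is precisely $\bigl(\mathrm{Aut}(\psi)\circ\mathrm{Aut}(\varphi)\bigr)(h)(a)$.

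Putting these together gives the equality of the two homomorphisms on every element, which is the claim. The argument is essentially bookkeeping: each map is conjugation by the relevant interpretation, and conjugation by a composite is the composite of the conjugations, once one checks that the diagonal action $h\mapsto h^\omega$ commutes with $\psi^\omega$ in the appropriate sense. The only point requiring a little care — and the place I expect to spend the most words — is verifying that $h$ acting on $\widehat{(K^\omega,\rho_K^\omega)}$ genuinely restricts to the "$\psi^\omega$-transported" action of $\mathrm{Aut}(\psi)(h)$ on $N^\omega$: this uses that $\psi(N)$ is definable and $h$-invariant (so $\psi^\omega(N^\omega)$ is $h$-invariant), that the pseudometric $\rho$ is defined as the canonical definable extension of the pushforward of $\rho_N$ (so it is automatically $h$-invariant and matches $\rho_N$ under $\psi^\omega$), and that $h$ acting on $K^\omega$ is the diagonal action, so its effect on each coordinate is governed by the single-coordinate recipe defining $\mathrm{Aut}(\psi)(h)$. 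Everything else is immediate from the definition of $\mathrm{Aut}(-)$ as a conjugation map.
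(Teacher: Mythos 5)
Your proof is correct, and the approach is the natural one: unwind both sides as conjugation maps and verify coordinate-by-coordinate that the diagonal action of $h$ on $K^\omega$ transports, via $\psi^\omega$, to the diagonal action of $\mathrm{Aut}(\psi)(h)$ on $N^\omega$. The paper states this lemma without proof (it is treated as routine bookkeeping, which it is), so there is no distinct published argument to compare against; your write-up simply supplies the verification the authors elided, and correctly flags the only point needing care, namely that $h$ stabilizes $\psi^\omega(N^\omega)$ and respects the pushed-forward pseudometric.
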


\begin{lem}\label{definableinterpretation}
Let $M$ be a separably categorical metric structure and $\varphi$ an interpretation of $M$ in itself. Then $\varphi$ is definable in $M$ if and only if $\text{Aut}(\varphi) = \text{id}_{\text{Aut}(M)}$.
\end{lem}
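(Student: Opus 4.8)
The plan is to leverage Proposition~\ref{criterion} (definability $=$ invariance under $\mathrm{Aut}(M)$) via Remark~\ref{categorical}, which tells us that $\widehat{(M^\omega,\rho)}$ is again separably categorical, so the same criterion applies there. The forward direction is the easy one: if $\varphi$ is definable in $M$, then every automorphism $g$ of $M$ preserves $\varphi$ together with the pseudometric $\rho$ and the predicate $x\mapsto\rho(x,\varphi(M))$; hence for $a\in M$ the element $\mathrm{Aut}(\varphi)(g)(a)=\varphi^{-1}(g(\varphi(a)))$ must coincide with $g(a)$, because $g$ already moves $\varphi(a)$ to $\varphi(g(a))$ — definability of $\varphi$ is precisely what guarantees $g\bigl(\varphi(a)\bigr)=\varphi\bigl(g(a)\bigr)$. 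So $\mathrm{Aut}(\varphi)(g)=g$ for all $g$, i.e.\ $\mathrm{Aut}(\varphi)=\mathrm{id}$.

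For the converse, suppose $\mathrm{Aut}(\varphi)=\mathrm{id}_{\mathrm{Aut}(M)}$. The thing to show is that the graph of $\varphi$ — or equivalently the map $a\mapsto\varphi(a)\in\widehat{(M^\omega,\rho)}$ — is definable in $M$. Concretely I would consider the predicate $Q$ on $M\times M^\omega$ (or on $M^{1+\omega}$) given by $Q(a,x)=\rho\bigl(x,\varphi(a)\bigr)$, and aim to prove $Q$ is definable; then $\varphi$ is definable in the sense of the paper. By Proposition~\ref{criterion} applied to the separably categorical structure $M$ (acting on $M^{1+\omega}$, using the infinite-variable version from the Remark after Proposition~\ref{criterion}), it is enough to check that $Q$ is $\mathrm{Aut}(M)$-invariant. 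But an automorphism $g$ sends $(a,x)$ to $(g(a),g^\omega(x))$, and
\[
Q\bigl(g(a),g^\omega(x)\bigr)=\rho\bigl(g^\omega(x),\varphi(g(a))\bigr)=\rho\bigl(g^\omega(x),g^\omega(\varphi(a))\bigr),
\]
where the second equality uses exactly the hypothesis $\mathrm{Aut}(\varphi)=\mathrm{id}$, which forces $\varphi(g(a))=g^\omega(\varphi(a))$. Since $\rho$ is a definable pseudometric it is $\mathrm{Aut}(M)$-invariant on $M^\omega$, so the right-hand side equals $\rho(x,\varphi(a))=Q(a,x)$, giving invariance.

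The main obstacle — and the point that needs the most care — is bridging "$\mathrm{Aut}(\varphi)=\mathrm{id}$" and the pointwise identity $\varphi(g(a))=g^\omega(\varphi(a))$ for \emph{every} $g\in\mathrm{Aut}(M)$, and then extracting honest definability of $\varphi$ (not just of $Q$ as an abstract invariant predicate) in the precise sense demanded by the definition of interpretation — i.e.\ that $x\mapsto\rho(x,\varphi(M))$ and each $P_F$ behave correctly, but these are already built into $\varphi$ being an interpretation, so what remains is genuinely just definability of the graph predicate $Q$. A secondary subtlety is that $\varphi(a)$ lives in the completion $\widehat{(M^\omega,\rho)}$ rather than in $M^\omega$ itself, so $Q$ must be phrased as a limit (uniform limit of $\rho(x,x')$ over $\rho$-approximants $x'$ of $\varphi(a)$), and one should check this limit is still continuous and $\mathrm{Aut}(M)$-invariant — routine, given that $\rho$ is a definable (hence uniformly continuous) predicate. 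Once $Q$ is definable, unwinding the definition of "definable interpretation" completes the argument.
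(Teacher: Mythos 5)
Your proof is correct and follows essentially the same route as the paper: both directions hinge on the equivalence between $\mathrm{Aut}(\varphi)=\mathrm{id}$ and $\varphi$ commuting with every automorphism, and the converse then invokes Proposition~\ref{criterion} (invariance $\Rightarrow$ definability, using continuity of $\varphi$). Your version merely makes explicit the predicate $Q(a,x)=\rho(x,\varphi(a))$ whose invariance witnesses definability of $\varphi$, which the paper leaves implicit.
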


\begin{proof}
$\Rightarrow$] If $\varphi$ is definable, then $\varphi$ is $\text{Aut}(M)$-invariant. Then, if $g \in \text{Aut}(M)$ and $a \in M$, we have $\text{Aut}(\varphi)(g)(a) = \varphi^{-1}(g(\varphi(a))) = \varphi^{-1}(\varphi(g(a))) = g(a)$ and thus $\text{Aut}(\varphi)$ is the identity.

$\Leftarrow$] If $\text{Aut}(\varphi)$ is the identity, the same computation shows that $\varphi$ is $\text{Aut}(M)$-invariant. Since $\varphi$ is continuous (it is isometric), this implies that $\varphi$ is definable (by proposition \ref{criterion}). 
\end{proof}

This lemma will yield the first direction of theorem \ref{reconstruction}.

\subsection{A special structure: a group with a hat}

We now proceed to the second part of the correspondence: the actual reconstruction. To this aim, we come down to a canonical structure built from the automorphism group and with which the structure is bi-interpretable.The following construction is due to Melleray (\cite[theorem 6]{MR2767973}).

Let $M$ be a metric structure and $G$ be its automorphism group. Whenever we endow $G$ with a compatible left-invariant metric $d_L$, we can consider the structure $\widehat G$ whose universe is the left completion $\widehat G_L$ of $(G, d_L)$ and whose relations are all those maps of the form $R_C(x) = d(x, C)$, for some orbit closure $C$ of $(\widehat G_L)^n$ under the diagonal action of $G$. Then the automorphism group of $\widehat G$ is $G$. 

 Now fix a dense sequence $\xi \in M^\omega$. Then, the metric on $G$ given by $d_\xi(g,h) = d(g\xi, h\xi)$ is a compatible left-invariant metric.

\begin{prop}\label{ben, oui}
The structure $\widehat G$ (obtained from this particular metric $d_\xi$) is interpretable in $M$.
\end{prop}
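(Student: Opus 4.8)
The plan is to exhibit an explicit interpretation of $\widehat G$ in $M$, using the dense sequence $\xi$ to identify $G$ with a subset of $M^\omega$. The key observation is that the map $g \mapsto g\xi$ is an isometric embedding of $(G, d_\xi)$ into $(M^\omega, d^\omega)$ by the very definition of $d_\xi$, and it extends to an isometric embedding $\varphi$ of the completion $\widehat G_L$ into $\widehat{(M^\omega, d^\omega)}$. So the candidate interpretation is this $\varphi$, with $\rho = d^\omega$ (which is certainly a definable pseudometric on $M^\omega$, being the distance predicate itself). The image $\varphi(\widehat G_L)$ is, by construction, the closure in $\widehat{(M^\omega,d^\omega)}$ of the orbit $G\cdot\xi$; so it is an orbit closure, and the predicate $x \mapsto d^\omega(x, \varphi(\widehat G_L))$ is $G$-invariant. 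Since $M$ is separably categorical, remark \ref{categorical} (applied with $\rho = d^\omega$) tells us $\widehat{(M^\omega,d^\omega)}$ is separably categorical, so by proposition \ref{criterion} this $G$-invariant continuous predicate is definable in $M$. That handles the first of the two definability conditions.

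Next I would verify the second condition: that for every formula $F$ of the language of $\widehat G$, the predicate $P_F(x) = F(\varphi^{-1}(x))$ on $\varphi(\widehat G_L)^r$ is definable in $M$. By the remark following the definition of interpretation, it suffices to treat the relation symbols of $\widehat G$, since $\widehat G$ has no function symbols beyond what the metric provides and the metric $d_{\widehat G}$ pulls back to (the restriction of) $d^\omega$, which is definable. The relations of $\widehat G$ are exactly the predicates $R_C(y_1,\dots,y_n) = d\big((y_1,\dots,y_n), C\big)$ for $C$ an orbit closure in $(\widehat G_L)^n$ under the diagonal $G$-action. Pulling back along $\varphi$, $R_C$ becomes $x \mapsto \operatorname{dist}\big(x, \varphi^{\times n}(C)\big)$ on $\varphi(\widehat G_L)^n$, and $\varphi^{\times n}(C)$ is itself an orbit closure in $\widehat{(M^\omega,d^\omega)}^n \cong \widehat{(M^{\omega}, d^\omega)}^n$ under the $G$-action — because $\varphi$ is $G$-equivariant (it intertwines left translation on $G$ with the diagonal $G$-action on $M^\omega$). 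Hence this pulled-back predicate is again continuous and $G$-invariant, so definable in $M$ by proposition \ref{criterion} and remark \ref{categorical}.

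Finally one must record that $\varphi(\widehat G_L)$ is isometric to $(\widehat G, d_{\widehat G})$ as a metric space — which is immediate, since the universe of $\widehat G$ is $\widehat G_L$ with the metric induced from $d_\xi = d^\omega\!\restriction_{G\xi}$, and $\varphi$ was built precisely as the completion of $g\mapsto g\xi$. Assembling these pieces, $\varphi : \widehat G \to \widehat{(M^\omega, d^\omega)}$ is an interpretation of $\widehat G$ in $M$.

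The main obstacle I anticipate is the bookkeeping around the equivariance and the identification $\widehat{(M^{\omega\times\omega}, (d^\omega)^\omega)} = \widehat{(M^\omega, d^\omega)}$ when one wants the images of orbit closures to again be orbit closures in the same ambient sort; one must be careful that the diagonal $G$-action on $M^\omega$ corresponds under $\varphi$ to left translation on $\widehat G_L$, and that orbit closures go to orbit closures rather than merely to closed $G$-invariant sets. Once equivariance is set up cleanly, every definability claim reduces, via remark \ref{categorical} and proposition \ref{criterion}, to the observation that the relevant predicates are continuous and $G$-invariant, which is essentially automatic from the construction.
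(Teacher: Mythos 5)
Your proof is correct and follows essentially the same approach as the paper's: the interpretation is the isometric extension of $g \mapsto g\xi$, with definability of $d(\cdot,\overline{G\cdot\xi})$ and of the pulled-back predicates $P_{R_C}$ both following from $G$-invariance via Proposition~\ref{criterion} (together with Remark~\ref{categorical}). The paper phrases the $G$-invariance of $P_{R_C}$ as a short chain of equalities $P_{R_C}(gx)=P_{R_C}(x)$ using that $R_C$ is $G$-invariant; you phrase it as equivariance of $\varphi$ sending orbit closures to orbit closures --- the same content.
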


\begin{proof}
Consider the map $\psi : g \mapsto g\xi$ from $(G,d_\xi)$ to $\overline{G \cdot \xi} \subseteq M^\omega$. It is isometric so it extends to the left completion of $G$. Then $\psi$ is an interpretation of $\widehat G$ in $M$. 

Indeed, the predicate $P : x \mapsto d(x,\psi(\widehat G)) = d(x,\overline{G \cdot \xi})$ on $M^\omega$ is $G$-invariant so it is definable in $M$ by proposition \ref{criterion}. Moreover, if $C$ is an orbit closure and $R = R_C$ is the associated predicate in $\widehat G$, we have
\begin{align*}
P_R(gx) 
&= R(\psi^{-1}(gx)) \\
&= R(g\psi^{-1}(x)) \\
&= R(\psi^{-1}(x))  \text{       because $R$ is invariant by the automorphism group} \\
&= P_R(x),
\end{align*}
so $P_R$ is definable, which completes the proof.
\end{proof}

\begin{rmq}\label{identification}
In fact, since the image of $\psi$ is dense, $\overline{G \cdot \xi}$ is exactly the left completion of $G$ and from now on, we identify $\widehat G$ with $\overline{G \cdot \xi}$.
\end{rmq}

The above proposition, along with remark \ref{categorical}, implies that if $M$ is separably categorical, then so is $\widehat G$. And in that case, if $d_L$ is any other compatible left-invariant metric, then the associated hat structure is bi-interpretable with $\widehat G$: the two metrics generate the same topology so they are continuous with respect to each other, and their left-invariance implies, by proposition \ref{criterion}, that they are definable from each other. All the hat structures obtained from $G$ are bi-interpretable and we will therefore identify them.

Moreover, if $M$ is separably categorical, then the structure $M$ is also interpretable in $\widehat G$. In fact, we have the following more general result which will be the key ingredient in the proof of theorem \ref{onesidereconstruction}.

\begin{thm}\label{arbre}
Let $N$ be a metric structure and let $H$ be a subgroup of $\text{Aut}(N)$ which acts approximately oligomorphically on $N$. Then $N$ is interpretable in $\widehat H$. 
\end{thm}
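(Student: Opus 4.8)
The plan is to realize $N$ as an imaginary sort of $\widehat H$, reversing the construction of proposition \ref{ben, oui}.

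First, some reductions. Since $\text{Aut}(N)\supseteq H$ also acts approximately oligomorphically on $N$, the structure $N$ is separably categorical by the Ryll--Nardzewski theorem; in particular it is separable and complete, and by proposition \ref{criterion} definability in $N$ coincides with $\text{Aut}(N)$-invariance. Passing from $H$ to its closure in $\text{Aut}(N)$ changes neither the hypothesis nor the structure $\widehat H$, so I may assume $H$ is a closed, hence Polish, subgroup. Fix a dense sequence $\xi\in N^\omega$ and equip $H$ with the compatible left-invariant metric $d_\xi(g,h)=d^\omega_N(g\xi,h\xi)$; as in remark \ref{identification} this identifies $\widehat H$ with $\overline{H\cdot\xi}\subseteq N^\omega$, the relations being the distance-to-orbit-closure predicates $R_C$. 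Since $H$ acts approximately oligomorphically on $N$ it does so on $N^\omega$, and then, by a short argument in the spirit of remark \ref{categorical} (approximate oligomorphicity passes to closed invariant subsets), on $\overline{H\xi}=\widehat H$; together with $\text{Aut}(\widehat H)=H$ this shows that $\widehat H$, and hence all its imaginaries, are separably categorical, so that definability there is again the same as $H$-invariance.

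The heart of the argument is then to produce a definable pseudometric $\rho$ on $\widehat H^{\,\omega}$ and an isometric map $\varphi\colon(N,d_N)\to\widehat{(\widehat H^{\,\omega},\rho)}$ whose image is a definable subset and through which the structure of $N$ pulls back. The guiding observation is that each entry $\xi_i$ of $\xi$ is the image of the basepoint of $\widehat H$ under the $H$-equivariant, $2^i$-Lipschitz coordinate projection $\pi_i\colon\widehat H\to N$, and that $\bigcup_i\pi_i(\widehat H)\supseteq\{\xi_i:i<\omega\}$ is dense in $N$, so that an arbitrary $a\in N$ can be approached using these projections; this data can be recorded inside $\widehat H^{\,\omega}$, and $\rho$ is the pseudometric --- built from the $\pi_i$ and the metric $d_N$ --- that measures the distance between the elements of $N$ so coded. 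Being assembled from the continuous, $H$-equivariant projections and the $H$-invariant metric $d_N$, the pseudometric $\rho$ is continuous and $H$-invariant, hence definable in $\widehat H$ by the previous paragraph, and $\varphi$ is the induced isometric embedding, which one arranges to be $H$-equivariant and, by completeness of $N$, to have closed image.

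It then remains to verify the two clauses in the definition of an interpretation. That $x\mapsto\rho(x,\varphi(N))$ is definable in $\widehat H$ holds because $\varphi(N)$ is a closed $H$-invariant subset of the separably categorical structure $\widehat{(\widehat H^{\,\omega},\rho)}$, so that proposition \ref{criterion} applies; that each formula $F$ of $N$ pulls back to a definable predicate on $\varphi(N)$ follows likewise from continuity and $H$-invariance of $F\circ\varphi^{-1}$, and by the observation following the definition of interpretation it is enough to treat the relation symbols and the predicates $(x,y)\mapsto d(x,f(y))$ coming from the function symbols of $N$. The step I expect to be the main obstacle is precisely the construction of $\rho$ and $\varphi$: one must design $\rho$ so that the image $\varphi(N)$ captures \emph{all} of $N$ and not merely a single orbit closure --- a single projection $\pi_i$ only reaches $\overline{H\xi_i}$, and elements of $N$ fixed by $H$ are not honest points of $\widehat H^{\,\omega}$ --- which forces a genuinely global use of the coordinates together with the passage to the $\rho$-quotient and its completion, all while keeping $\rho$ definable and $\varphi$ isometric and $H$-equivariant.
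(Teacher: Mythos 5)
Your framing and reductions are correct and match the paper: one identifies $\widehat H$ with $\overline{H\cdot\xi}\subseteq N^\omega$, observes that approximate oligomorphicity makes $\widehat H$ and its definable quotients separably categorical, so that definability reduces to $H$-invariance via proposition \ref{criterion}, and then one must produce a definable pseudometric $\rho$ and an isometric, $H$-equivariant $\varphi$. But you then stop at exactly the place where the proof lives: you write that ``$\rho$ is the pseudometric --- built from the $\pi_i$ and the metric $d_N$ --- that measures the distance between the elements of $N$ so coded'' without saying what that pseudometric is or why any point of $N$ can be so coded, and you close by flagging that very construction as ``the main obstacle.'' That is a genuine gap, not a routine verification: as you yourself note, each coordinate projection $\pi_i$ only reaches the single orbit closure $\overline{H\xi_i}$, and there is no obvious way to glue these into a map onto all of $N$; moreover $N$ may contain points whose $H$-orbit closure is trivial, which a bare tuple from $\widehat H^{\,\omega}$ cannot name.

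The missing ingredient in the paper is the use of compactness of the orbit space $N\sslash H$. From a dense sequence $\zeta\in N^\omega$ one builds a finitely-branching tree $T$ whose nodes index entries of $\zeta$ and whose levels implement finer and finer finite covers of $N\sslash H$ by balls of radius $2^{-n}$, so that every infinite branch $\sigma$ yields a convergent sequence $(\zeta_{\sigma(i)})$ and every orbit closure is hit. One then sets, on $\overline{H\cdot\zeta}\times[T]$, the pseudometric $\rho((x,\sigma),(y,\tau))=\lim_i d(x_{\sigma(i)},y_{\tau(i)})$; the evaluation map $(x,\sigma)\mapsto\lim_i x_{\sigma(i)}$ is isometric with dense image in $N$, giving the desired $\tilde\varphi$ on completions. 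Here the $[T]$-coordinate supplies exactly the ``which orbit closure'' data your projection picture could not. Finally the compact space $[T]$ must itself be coded in a power of $\widehat H$: branches are turned into $0$--$1$ sequences, each bit is coded by a $\delta$-class of a pair in $\widehat H^2$ for a suitable $H$-invariant pseudometric $\delta$ (using an element $h_0\in H$ that moves $\zeta_0$), and the pushed-forward $\rho$ is extended to a definable pseudometric on the whole sort via proposition 3.6 of \cite{MR2723769}. None of this --- the tree, the evaluation-limit pseudometric, the coding of $[T]$ --- appears in your proposal, so the argument as written does not prove the theorem.
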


\begin{proof}
Let $\zeta$ be a dense sequence in $N$. Then $\widehat H = \overline{H \cdot \zeta}$. Now the assumption ensures that the space $N \sslash H$ of orbit closures of $N$ by $H$ is compact. 

The intuition for the proof is to say that $N$ is not far from being the product $\widehat H \times N \sslash H$ and moreover that compact spaces should be interpreted in every structure. As a matter of fact, we will build a particular system of representatives of $N \sslash H$ that $\widehat H$ will interpret.

We begin by building a tree $T$ representing this compact quotient $N \sslash H$. For this, we will choose representatives, within $\zeta$, of a dense sequence of orbit closures that witnesses the compactness of this quotient, and $T$ will be the tree of their indices in $\zeta$.
More precisely, we build the tree by induction: first, there exist $\zeta_{n_1}, ..., \zeta_{n_k}$ in $\zeta$ such that the balls of radius $\frac 12$ centered in the closures of the orbits of $\zeta_{n_1},..., \zeta_{n_k}$ cover all of the quotient. The \textit{indices} of those elements constitute the first level of our tree. For the next step, we cover each of the balls $B(\zeta_{n_i}, \frac 12)$ in $N$ with a finite number of balls of radius $\frac 14$ centered in elements of $\zeta$ so that the second level of our tree consists of the indices of those centers, and so on. 

The construction ensures that for every infinite branch of $T$, the sequence $(\zeta_{\sigma(i)})$ converges in $N$. Moreover, every orbit closure corresponds to an infinite branch of $T$ (maybe even several): for every $a$ in $N$, there exists an infinite branch $\sigma$ of $T$ such that the limit of the sequence $(\zeta_{\sigma(i)})$ is in the closure of the orbit of $a$. Let $[T]$ be the set of infinite branches of $T$.

We now embed $N$ isometrically into (the completion of) a quotient of $\overline{H \cdot \zeta} \times [T]$, which we identify with $\widehat H \times [T]$. This will give the base map for our interpretation.

Endow the set $\overline{H \cdot \zeta} \times [T]$ with the following pseudometric
$$\rho((x,\sigma), (y, \tau)) = \lim_{i \to \infty} d(x_{\sigma(i)}, y_{\tau(i)}).$$
Since for every branch $\sigma$ in $[T]$, the sequence $(\zeta_{\sigma(i)})$ converges, this is also true of every $(x_{\sigma(i)})$ with $x$ in $\overline{H \cdot \zeta}$, so $\rho$ is well-defined.

We now define a map $\varphi : (\overline{H \cdot \zeta} \times [T], \rho) \to N$ by $\displaystyle{\varphi(x,\sigma) = \lim_{i \to \infty} x_{\sigma(i)}}$. By definition of $\rho$, the map $\varphi$ is isometric. In addition, the image of $\varphi$ is dense in $N$. Indeed, let $a$ be an element of $N$ and $\eps > 0$. There exists a branch $\sigma$ in $[T]$ such that $(\zeta_{\sigma(i)})$ converges to some $a'$ in $N$ which is in the same $H$-orbit closure as $a$, that is, there exists $h \in H$ such that $d(h(a'),a) < \eps$, so $d(\varphi(h \zeta_{\sigma(i)}), a) < \eps$, hence the density. 

Thus, the isometric map $\varphi$ can be extended to an isometry from the completion of $(\overline{H \cdot \zeta} \times [T], \rho)$ onto $N$. Then its inverse, call it $\tilde \varphi$, is the desired isometric map between $N$ and the completion of $(\widehat H \times [T], \rho)$. This was the first step in our intuition. 

In order to see $\tilde \varphi$ as an interpretation of $N$ in $\widehat H$, it remains to interpret $[T]$ in $\widehat H$, in other words, to code the branches of $T$ in a power of $\widehat H$ (that is $\overline{H \cdot \zeta}$ via the identification of the previous remark). The map $\tilde \varphi$ will then induce a map $N \to \widehat H \times \widehat H^\omega$, which will be the desired interpretation.

A branch can be coded by a sequence of zeroes and ones\footnote{There are many ways of doing so; we pick one. For instance, we may say that given a branch of $T$, we follow the levels of $T$ one by one, and we put a $1$ in our sequence when we hit an element of our branch and a $0$ otherwise.}. Then we code\footnote{There are also many ways of coding zeroes and ones in a power of $\overline{H \cdot \zeta}$. Here we go for a method which compares two sequences of a pair in a very simple way.} each bit by a pair of elements of $\overline{H \cdot \zeta}$. Consider the pseudometric on $\overline{H \cdot \zeta} \times \overline{H \cdot \zeta}$ defined by
$$\delta((x,x'),(y,y')) = \lvert d(x_0,x'_0) - d(y_0,y'_0) \rvert,$$
which compares the differences between the first coordinates of the two sequences of the pair. This is a definable pseudometric and we code the bit $0$ by the $\delta$-class of $(\zeta, \zeta)$ and the bit $1$ by the $\delta$-class of $(\zeta, h_0 \zeta)$ where $h_0$ is some element of $H$ that does not fix $\zeta_0$. Note that the code is invariant under the action of $H$. 

Finally, we identify branches of $T$ with their codes in $(\widehat H^2, \delta)^\omega$ and we transfer the pseudometric $\rho$ on $\widehat H \times [T]$ to a definable pseudometric $\tilde \rho$ on $\widehat H \times (\widehat H^2, \delta)^\omega$. Note that the elements of $(\widehat H^2, \delta)^\omega$ that code a branch of $[T]$ may not cover the whole of $(\widehat H^2, \delta)^\omega$, but \cite{MR2723769}'s proposition 3.6 allows us to extend $\tilde \rho$ to $(\widehat H^2, \delta)^\omega$ all the same. 

So we can now rewrite the map $\tilde \varphi$ as a map from $N$ to the completion of $(\widehat H \times (\widehat H^2, \delta)^\omega, \tilde \rho)$. The oligomorphicity of the action of $H$ on $N$ implies that the structure $\widehat H = \overline{H \cdot \zeta}$, whose automorphism group is $H$, is separably categorical. Since $\tilde \rho$ is invariant under the action of $H$, proposition \ref{criterion} then yields that the pseudometric $\tilde \rho$ is definable in $\widehat H$. Therefore, this new map $\tilde \varphi$ is an interpretation of $N$ in $\widehat H$. 
\end{proof}

\begin{cor}\label{bi-interpretable avec le chapeau}
If $M$ is separably categorical, then the structures $M$ and $\widehat G$ are bi-interpretable.
\end{cor}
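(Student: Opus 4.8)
The plan is to assemble the two one-sided interpretations already at hand and verify that both round trips are trivial on automorphism groups, whence definable by Lemma~\ref{definableinterpretation}. Since $M$ is separably categorical, $G = \text{Aut}(M)$ acts approximately oligomorphically on $M$ by the Ryll--Nardzewski theorem, so Theorem~\ref{arbre}, applied with $N = M$ and $H = G$ and with the chosen dense sequence $\xi$, yields an interpretation $\varphi$ of $M$ in $\widehat G$; Proposition~\ref{ben, oui} yields an interpretation $\psi$ of $\widehat G$ in $M$. By Remark~\ref{categorical} the structure $\widehat G$ is itself separably categorical, so both $\psi \circ \varphi$ (an interpretation of $M$ in $M$) and $\varphi \circ \psi$ (an interpretation of $\widehat G$ in $\widehat G$) are covered by Lemma~\ref{definableinterpretation}: it suffices to prove $\text{Aut}(\psi \circ \varphi) = \text{id}_{\text{Aut}(M)}$ and $\text{Aut}(\varphi \circ \psi) = \text{id}_{\text{Aut}(\widehat G)}$.

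By functoriality of $\text{Aut}(-)$, these two homomorphisms are the two composites of $\text{Aut}(\varphi) : \text{Aut}(\widehat G) \to \text{Aut}(M)$ and $\text{Aut}(\psi) : \text{Aut}(M) \to \text{Aut}(\widehat G)$, so it is enough to show that $\text{Aut}(\varphi)$ and $\text{Aut}(\psi)$ are mutually inverse; I would in fact identify each with the canonical identification of $\text{Aut}(\widehat G)$ with $G$ arising from the left-translation action of $G$ on $\widehat G = \overline{G \cdot \xi}$. For $\psi$ this is immediate from $\psi(g) = g\xi$: any $h \in \text{Aut}(M)$ acts coordinatewise on $M^\omega$, so $h(\psi(g)) = (hg)\xi = \psi(hg)$, i.e.\ $\psi$ intertwines left translation by $h$ on $\widehat G$ with the action of $h$ on $M$; hence $\text{Aut}(\psi)(h)$ is left translation by $h$, which is $h$ under the identification. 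For $\varphi$ one unwinds the proof of Theorem~\ref{arbre}: left translation by $h$ on $\widehat H = \overline{H \cdot \xi}$ also acts coordinatewise on $M^\omega$, the codes of the branches of $T$ in powers of $\widehat H$ are $H$-invariant by construction, and $\varphi$ is defined by coordinatewise limits $\lim_i x_{\sigma(i)}$, so by continuity of $h$ we get $h(\varphi(x,\sigma)) = \varphi(hx,\sigma)$; thus $\varphi$ too intertwines the two actions of $h$, and $\text{Aut}(\varphi)(h) = h$. Consequently both composites above are the respective identities (one may equivalently note directly that the composite interpretations $\psi \circ \varphi$ and $\varphi \circ \psi$ are $G$-invariant and appeal to Proposition~\ref{criterion}), Lemma~\ref{definableinterpretation} makes $\psi \circ \varphi$ and $\varphi \circ \psi$ definable, and therefore $M$ and $\widehat G$ are bi-interpretable.

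The bookkeeping is routine --- for instance, one uses the same dense sequence $\xi$ in Proposition~\ref{ben, oui} and in Theorem~\ref{arbre} so that the two copies of $\widehat G$ literally coincide, or else one invokes that all hat structures of $G$ are bi-interpretable. The one step I expect to need genuine care is the equivariance computation for $\varphi$: every choice made in the proof of Theorem~\ref{arbre} --- the tree $T$, the encoding of branches as sequences of zeros and ones, the coding of bits by pairs in $(\widehat H^2,\delta)$ --- must be checked to be $H$-equivariant, respectively $H$-invariant, in the appropriate sense, so that the induced action of $H$ on the imaginary sort $\widehat H \times (\widehat H^2,\delta)^\omega$ is carried by $\tilde\varphi$ to the tautological action of $H = G$ on $M$. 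Granting this, the corollary follows.
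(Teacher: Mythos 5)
Your proposal is correct and follows essentially the same route as the paper: both arguments assemble the interpretation of $M$ in $\widehat G$ from Theorem~\ref{arbre} with the interpretation of $\widehat G$ in $M$ from Proposition~\ref{ben, oui}, observe that both are equivariant for the canonical identification of $\text{Aut}(\widehat G)$ with $G$, and conclude definability of the composites via Proposition~\ref{criterion} together with Remark~\ref{categorical} (your detour through Lemma~\ref{definableinterpretation} is just a repackaging of that same invariance argument). You are in fact more explicit than the paper about the equivariance bookkeeping inside the proof of Theorem~\ref{arbre}, which the paper compresses into the sentence ``Both interpretations respected the actions of the automorphism groups.''
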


\begin{proof}
The proposition implies in particular that $M$ is interpretable in $\widehat G$. Thus, it suffices to show that the compositions of the interpretations constructed in the previous propositions are definable. Both interpretations respected the actions of the automorphism groups so proposition \ref{criterion} and remark \ref{categorical} allow us to conclude.
\end{proof}

\subsection{Reconstruction}

We are now ready to complete the reconstruction.

\begin{thm}\label{onesidereconstruction}
Let $M$ and $N$ be two metric structures, with $M$ separably categorical. Let $f : \text{Aut}(M) \to \text{Aut}(N)$ be a continuous group homomorphism whose image acts approximately oligomorphically on $N$. Then $N$ is interpretable in $M$.
\end{thm}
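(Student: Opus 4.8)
The plan is to chain together the interpretability results already established. Let $G = \text{Aut}(M)$ and $H = f(G) \subseteq \text{Aut}(N)$. By hypothesis $H$ acts approximately oligomorphically on $N$, so by Theorem~\ref{arbre} the structure $N$ is interpretable in $\widehat H$. Since $M$ is separably categorical, Corollary~\ref{bi-interpretable avec le chapeau} gives that $M$ is bi-interpretable with $\widehat G$; in particular $\widehat G$ is interpretable in $M$. Because interpretations compose (as explained after the definition of interpretation, using \cite[proposition 3.6]{MR2723769}), it suffices to produce an interpretation of $\widehat H$ in $\widehat G$; composing $\widehat H \rightsquigarrow \widehat G \rightsquigarrow M$ with $N \rightsquigarrow \widehat H$ then yields the desired interpretation of $N$ in $M$.

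So the heart of the matter is: a continuous homomorphism $f : G \to \text{Aut}(N)$ with image $H$ induces an interpretation of $\widehat H$ in $\widehat G$. First I would fix a dense sequence $\zeta$ in $N$ and equip $H$ with the left-invariant metric $d_\zeta(h,h') = d(h\zeta, h'\zeta)$, so that $\widehat H = \overline{H \cdot \zeta} \subseteq N^\omega$ (via remark~\ref{identification}). Pulling back along $f$, consider the map $G \to \widehat H$ sending $g \mapsto f(g)\zeta$; its image is dense in $\widehat H$. This map is not injective in general (it kills $\ker f$), but since $f$ is continuous the induced pseudometric $\rho_f(g,g') = d(f(g)\zeta, f(g')\zeta)$ on $G$ is continuous with respect to any compatible left-invariant metric on $G$, hence continuous on $\widehat G$, and it is left-invariant, hence $G$-invariant. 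Because $M$ is separably categorical, so is $\widehat G$ (by Proposition~\ref{ben, oui} and Remark~\ref{categorical}), so Proposition~\ref{criterion} makes $\rho_f$ a definable pseudometric on $\widehat G$ (extended to $\widehat G^\omega$ in the obvious way using only the first coordinate). Completing $(\widehat G, \rho_f)$ gives precisely $\widehat H$, and the completion map $\widehat G \to \widehat H$ is the base isometry of our interpretation; the predicate measuring distance to its (dense, hence full) image is identically $0$, so definable.

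It remains to check that each relation $R_C$ of $\widehat H$ — distance to an $H$-orbit closure $C \subseteq \widehat H^n$ — pulls back to something definable in $\widehat G$. But the preimage of $C$ under the completion map $\widehat G^n \to \widehat H^n$ is a $G$-invariant set (since $f$ intertwines the $G$-action on $\widehat G$ with the $H$-action on $\widehat H$, and $C$ is $H$-invariant), so the predicate $x \mapsto d_{\rho_f}(x, \text{preimage of } C)$ is a continuous, $G$-invariant predicate on $\widehat G^n$, hence definable by Proposition~\ref{criterion}; and it agrees with $R_C$ composed with the completion map. This verifies the last clause in the definition of interpretation, so $\widehat H$ is interpretable in $\widehat G$, and composing the three interpretations finishes the proof.

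The main obstacle I anticipate is bookkeeping rather than conceptual: making sure the completion map $\widehat G \to \widehat H$ really has dense image (so that the ``distance to the image'' predicate vanishes and the base map is genuinely onto the completion of the quotient pseudometric space), and carefully handling the passage from pseudometrics on $\widehat G$ to the power $\widehat G^\omega$ so that the definition of interpretation is met verbatim — this is where one invokes \cite[proposition 3.6]{MR2723769} to extend definable pseudometrics. One should also confirm that separable categoricity of $\widehat H$ (needed implicitly so that definability on $\widehat H$ behaves well when composing) follows from that of $N$ via the oligomorphicity of $H$, exactly as in Remark~\ref{categorical}.
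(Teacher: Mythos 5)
Your proposal is correct and follows the same overall strategy as the paper: chain the interpretations $N \rightsquigarrow \widehat H$ (Theorem~\ref{arbre}), $\widehat H \rightsquigarrow \widehat G$ (the new content), and $\widehat G \rightsquigarrow M$ (Proposition~\ref{ben, oui}), then compose. The only real difference is in how you produce the definable pseudometric on $\widehat G$ whose quotient completion is $\widehat H$. The paper takes an arbitrary compatible left-invariant metric $d_L$ on $G$ and forms the quotient pseudometric $d'_L(g_1,g_2) = \inf\{d_L(g_1k_1, g_2k_2) : k_1,k_2\in\ker f\}$, asserting that this induces a compatible metric on $H$; you instead pull the specific metric $d_\zeta$ on $H$ back along $f$ to obtain $\rho_f(g,g') = d(f(g)\zeta, f(g')\zeta)$. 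Both pseudometrics are left-invariant and continuous, hence $G$-invariant and definable on the separably categorical $\widehat G$ by Proposition~\ref{criterion}; both have $\widehat H$ as their quotient completion; and the argument that orbit-closure predicates $R_C$ lift to definable $G$-invariant predicates on $\widehat G$ is the same in both. Your pullback construction has the small virtue of making it immediate (from continuity of $f$ and the identification $\widehat H = \overline{H\cdot\zeta}$) that the quotient completion is $\widehat H$ with the right topology, whereas the paper's quotient-pseudometric route relies on the unremarked fact that the quotient topology on $G/\ker f$ agrees with the subspace topology on $H$; but in substance the two are the same proof.
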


\begin{proof}
Set $G = \text{Aut}(M)$ and $H = f(G)$. Since $H$ acts approximately oligomorphically on $N$, theorem  \ref{arbre} implies that $N$ is interpretable in $\widehat H$. And by proposition \ref{ben, oui}, the structure $\widehat G$ is interpretable in $M$. It then suffices to show that $\widehat H$ is interpretable in $\widehat G$. 

Now $H$ is the quotient of $G$ by the closed normal subgroup $\text{Ker}(f)$. If $d_L$ is a left-invariant metric on $G$, then we can endow $G$ with the following left-invariant pseudometric 
$$d'_L(g_1, g_2) = \inf \{ d_L(g_1 k_1, g_2 k_2) : k_1, k_2 \in \text{Ker}(f) \}.$$
Since $\text{Ker}(f)$ is normal, this indeed defines a pseudometric, which induces a compatible metric on $H$. Then $\widehat{(H, d'_L)}$, which we identify with $\widehat H$ (see subsection 3.3), is the quotient\footnote{Here, we do not even need to go to a power of $\widehat G$ to interpret $\widehat H$.} of $\widehat G$ by the definable pseudometric $d'_L$ and is thus interpretable in $\widehat G$. 
\end{proof}

\begin{thm}\label{reconstruction}
Let $M$ and $N$ be separably categorical metric structures. Then $M$ and $N$ are bi-interpretable if and only if their automorphism groups are isomorphic as topological groups.
\end{thm}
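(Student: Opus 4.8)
The plan is to assemble the two directions from the machinery already developed. For the forward direction, suppose $M$ and $N$ are bi-interpretable, witnessed by interpretations $\varphi$ of $M$ in $N$ and $\psi$ of $N$ in $M$ with $\psi \circ \varphi$ and $\varphi \circ \psi$ definable. Applying the functor $\text{Aut}(-)$, I get continuous group homomorphisms $\text{Aut}(\varphi) : \text{Aut}(N) \to \text{Aut}(M)$ and $\text{Aut}(\psi) : \text{Aut}(M) \to \text{Aut}(N)$. By the functoriality lemma, $\text{Aut}(\psi \circ \varphi) = \text{Aut}(\psi) \circ \text{Aut}(\varphi)$ and $\text{Aut}(\varphi \circ \psi) = \text{Aut}(\varphi) \circ \text{Aut}(\psi)$. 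Since $\psi \circ \varphi$ is a definable interpretation of $M$ in itself, Lemma \ref{definableinterpretation} gives $\text{Aut}(\psi \circ \varphi) = \text{id}_{\text{Aut}(M)}$, and likewise $\text{Aut}(\varphi \circ \psi) = \text{id}_{\text{Aut}(N)}$. Hence $\text{Aut}(\varphi)$ and $\text{Aut}(\psi)$ are mutually inverse continuous homomorphisms, so they are isomorphisms of topological groups (continuity of the inverse is automatic since each is the inverse of the other).

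For the converse, suppose $f : \text{Aut}(M) \to \text{Aut}(N)$ is a topological group isomorphism. Then $f$ is a continuous homomorphism whose image is all of $\text{Aut}(N)$, and since $N$ is separably categorical, $\text{Aut}(N)$ acts approximately oligomorphically on $N$ (continuous Ryll-Nardzewski); in particular the image of $f$ acts approximately oligomorphically. Theorem \ref{onesidereconstruction} then yields that $N$ is interpretable in $M$. Applying the same argument to $f^{-1} : \text{Aut}(N) \to \text{Aut}(M)$ — using that $M$ is separably categorical and hence $\text{Aut}(M)$ acts approximately oligomorphically on $M$ — gives that $M$ is interpretable in $N$. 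So each is interpretable in the other; it remains to arrange that the two interpretations compose (in both orders) to definable self-interpretations.

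The point requiring care is this last step: producing interpretations whose compositions are definable, rather than merely interpretations in each direction. Here I would trace through the construction of Theorem \ref{onesidereconstruction}. The interpretation of $N$ in $M$ factors as $M \rightsquigarrow \widehat{G} \rightsquigarrow \widehat{H} \rightsquigarrow N$, where $G = \text{Aut}(M)$, $H = f(G) = \text{Aut}(N)$, and $\widehat{H}$ is interpreted in $\widehat{G}$ via the quotient pseudometric $d'_L$; but since $f$ is an isomorphism, $\text{Ker}(f)$ is trivial, so $d'_L = d_L$ and the step $\widehat{G} \rightsquigarrow \widehat{H}$ is just the identification of hat structures coming from the isomorphism $f$. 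Each of the three interpretations in the chain (Proposition \ref{ben, oui}, the identification $\widehat{G} \cong \widehat{H}$, and Theorem \ref{arbre}) respects the actions of the relevant automorphism groups — i.e. intertwines them via $f$ — and the same is true of the reverse chain built from $f^{-1}$. Composing the two chains in either order therefore yields a self-interpretation of $M$ (resp. $N$) that respects the full automorphism group action, so induces the identity on $\text{Aut}(M)$ (resp. $\text{Aut}(N)$). Since $M$ and $N$ are separably categorical, Lemma \ref{definableinterpretation} (via Proposition \ref{criterion} and Remark \ref{categorical}, exactly as in Corollary \ref{bi-interpretable avec le chapeau}) then gives that these compositions are definable, so $M$ and $N$ are bi-interpretable. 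The main obstacle is bookkeeping: checking that "respects the automorphism group action" is preserved under composition and survives the $\widehat{G} \cong \widehat{H}$ identification, but this is essentially the content already used to prove Corollary \ref{bi-interpretable avec le chapeau}.
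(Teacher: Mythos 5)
Your proof is correct and follows essentially the same route as the paper: the forward direction is identical, and the converse likewise proceeds through the hat-structure chain $M \rightsquigarrow \widehat{G} \rightsquigarrow \widehat{H} \rightsquigarrow N$. The only difference is packaging: the paper invokes Corollary~\ref{bi-interpretable avec le chapeau} together with an implicit transitivity of bi-interpretability between the hat structures, whereas you unwind Theorem~\ref{onesidereconstruction} in both directions and explicitly verify that the compositions induce the identity on the automorphism groups before applying Lemma~\ref{definableinterpretation} --- which is in fact exactly the mechanism underlying the paper's more terse appeal to transitivity.
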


\begin{proof}
$\Rightarrow$] Assume that $\varphi$ and $\psi$ are interpretations that witness the bi-interpretability of $M$ and $N$. Then lemma \ref{definableinterpretation} implies that $\text{Aut}(\varphi \circ \psi) = \text{id}_{\text{Aut}(N)}$ and $\text{Aut}(\psi \circ \varphi) = \text{id}_{\text{Aut}(M)}$. But $\text{Aut}(\varphi \circ \psi) = \text{Aut}(\varphi) \circ \text{Aut}(\psi)$ so $\text{Aut}(\varphi) = \text{Aut}(\psi)^{-1}$ and $\text{Aut}(\psi)$ is an isomorphism of topological groups between $\text{Aut}(M)$ and $\text{Aut}(N)$. Note that for this direction, we do not need the categoricity of the structures.

$\Leftarrow$] By corollary \ref{bi-interpretable avec le chapeau}, $M$ is bi-interpretable with $\widehat{\text{Aut}(M)}$ and $N$ with $\widehat{\text{Aut}(N)}$. Now if the two groups are isomorphic as topological groups, then their associated hat structures are bi-interpretable (by the discussion following remark \ref{identification}).
\end{proof}

\begin{ex}
In \cite{MR3047098}, it is shown, by an explicit computation, that the probability algebra $M$ of the unit interval is bi-interpretable with the space $N = L^1(\I, \I)$ of $\I$-valued random variables, identified up to equality almost everywhere. Our reconstruction theorem allows us to recover this result in a more elegant way. Indeed, the probability algebra of $\I$ is separably categorical, thus its automorphism group $G = \text{Aut}(\mu)$ is Roelcke-precompact (\cite[theorem 5.2]{MR2503307}).

Moreover, $G$ is also the automorphism group of $N$. The space of orbit closures of $N$ under the action of $G$ can be identified with the space of probability measures on $\I$. Indeed, given a measurable map in $N$, multiply it by $G$ to make it non-decreasing. The resulting map is then the characteristic function of some probability measure on $I$.

Thus, the space of orbit closures of $N$ is compact. This suffices, by \cite[theorem 2.4]{itaitodor}, to get that the action of $G$ on $N$ is approximately oligomorphic, hence that the structure $N$ is also separably categorical. Theorem \ref{reconstruction} then applies, proving that $M$ and $N$ are bi-interpretable.
\end{ex}

\subsection*{Acknowledgements}
The authors wish to thank Tom\'{a}s Ibarluc\'{i}a for his careful proofreading and the ensuing conversations. Part of this work was carried out during the Universality and homogeneity trimester program at the Hausdorff Institute for Mathematics in Bonn; we are thankful for the extended hospitality and excellent working conditions.

\bibliographystyle{plain}
\bibliography{/Users/adrianekaichouh/Dropbox/Maths/Bibliographie/biblio}

\end{document}